\documentclass[dvipdfmx]{amsart}
\usepackage[all,cmtip,arrow]{xy}
\usepackage{pb-diagram,pb-xy,
amsxtra,amsmath,amsthm,amssymb,amscd,ascmac,amsfonts,
multicol,tabularx,latexsym,mathrsfs}
\usepackage{amsfonts}
\theoremstyle{definition}

\newcommand{\Q}{\mathbb Q}
\newcommand{\Z}{\mathbb Z}

\newcommand{\Gal}{\mathrm{Gal}}
\newtheorem{thm}{Theorem}[section]

\newtheorem{prop}[thm]{Proposition}
\newtheorem{rem}[thm]{Remark}

\newtheorem{cor}[thm]{Corollary}

\newcommand{\Tor}{\mathrm{Tor}}

\newcommand{\Cl}{\mathrm{Cl}}
\newdimen\minCDarrowwidth
\usepackage{color}

\newcommand{\cE}{\mathcal{E}}
\newcommand{\cG}{\mathcal{G}}

\newcommand{\Ext}{\mathrm{Ext}}
\makeatletter
\@addtoreset{equation}{section}

\makeatother

\date{}
\title[]
{Construction of a $p$-extension of number fields whose unit group has prescribed Galois module structure}

\thanks{
The first author is supported by JSPS KAKENHI Grant Number 22K13898.
}

\author{Takenori Kataoka}
\address{Department of Mathematics, Faculty of Science Division II, Tokyo University of Science.
1-3 Kagurazaka, Shinjuku-ku, Tokyo 162-8601, Japan}
\email{tkataoka@rs.tus.ac.jp}

\author{Manabu Ozaki}
\address{Department of Mathematics, School of Fundamental Science and Engineering, Waseda University.
3-4-1, Ohkubo, Shinjuku-ku, Tokyo 169-8555, Japan}
\email{ozaki@waseda.jp}

\keywords{Unit groups, Galois structure}
 \subjclass[2020]{11R27 (Primary), 11R29}

\begin{document}

\maketitle

\begin{abstract}
Let $G$ be a finite $p$-group.
We construct a $G$-extension $K/k$ of number fields such that the $p$-adic completion of the unit group of $K$ has a prescribed $\Z_p[G]$-module structure, up to free direct summands.
\end{abstract}

\section{Introduction}\label{sec:intro}
Let $p$ be a fixed prime number.
For a number field $K$, let $E_K$ denote the unit group of $K$ and set $\cE_{K, p} = E_K \otimes_{\Z} \Z_p$.
We are interested in the Galois module structure of $\cE_{K, p}$ when $K$ is a Galois extension of a subfield $k$.
More precisely, the main question in this paper is what kind of $\Z_p[G]$-modules occur as $\cE_{K, p}$ for $G$-extensions $K/k$ when we fix a finite group $G$.

The main theorem of this paper is as follows.

\begin{thm}\label{thm:main}
Let $p$ be an odd prime number and 
$G$ a finite $p$-group. 
Let $C$ be a $\Z_p[G]$-lattice, namely, a finitely generated $\Z_p[G]$-module that is $\Z_p$-free.
We assume that
$
 (C\otimes_{\Z_p} \Q_p) \oplus \Q_p
$
is a free $\Q_p[G]$-module.
Then there exists an unramified $G$-extension $K/k$ of number fields such that 
\[
\cE_{K, p} \simeq C \oplus\Z_p[G]^{\oplus s}
\]
as $\Z_p[G]$-modules for a certain $s\ge 0$.
\end{thm}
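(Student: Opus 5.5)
Since $K/k$ is unramified, every place is unramified, so the infinite places of $k$ split completely. Dirichlet's unit theorem with Herbrand's refinement then gives $\cE_{K,p}\otimes\Q_p \oplus \Q_p \simeq \Q_p[G]^{\oplus r_k+1}$ as $\Q_p[G]$-modules. This matches the hypothesis on $C$, so we may regard the condition as necessary. The real task is to realize the integral structure up to free summands.

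\textbf{Step 1: a cohomological invariant determining the lattice.} The plan is to show that, for a $\Z_p[G]$-lattice $M$ with $M\otimes\Q_p\oplus\Q_p$ free, the class of $M$ modulo free summands is determined by data describable arithmetically. Concretely, choose a surjection $F\to \Z_p$ from a free module, and let $\Omega=\ker(\Z_p[G]\to\Z_p)=I_G$. The condition on $C$ says $C$ and $\Omega^{\oplus}$-type lattices are rationally compatible. I would aim to show $C\oplus\Z_p[G]^a\simeq$ the kernel $L$ of a surjection $\Z_p[G]^{b}\to Y$ with $Y$ finite? More usefully, I would use a four-term exact sequence of Ritter--Weiss / Tate type
\[
0\to \cE_{K,p}\to P\to P'\to \Cl_{K}\otimes\Z_p\oplus\text{(correction)}\to 0\quad\text{or}\quad 0\to\cE_{K,p}\to A\to B\to \Z_p\to0
\]
with $A,B$ cohomologically trivial. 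For unramified extensions with the set $S$ consisting of the infinite places together with a few auxiliary primes splitting completely, the $S$-unit sequence gives $0\to\cE_{K,p}\to\cE_{K,S,p}\to \Z_p[G]^{|S_f|}\to \Cl_K\to\Cl_{K,S}\to0$. Choose $S$ so that $\Cl_{K,S}\otimes\Z_p=0$, and combine this with Tate's sequence for $S$-units. Then $\cE_{K,p}$ is determined up to free summands by $\Cl_K\otimes\Z_p$ together with its extension class in $\Ext^2_{\Z_p[G]}(\Cl_K\otimes\Z_p, \cE_{K,p})$, equivalently a Tate-type class. Conversely, every lattice $C$ as in the statement arises, up to free summands, as the kernel in an exact sequence $0\to C\to Q_1\to Q_0\to X\to 0$ with $Q_i$ free and $X$ finite, which one gets by dimension shifting since $\hat H^*(G,C)$ matches that of $\Z_p$ shifted.

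\textbf{Step 2: the arithmetic construction.} We must produce an unramified $G$-extension $K/k$ whose $p$-class group $X=\Cl_K\otimes\Z_p$, together with its Tate-sequence class, realizes the given $X$ and extension data. First I will embed $G$ as an unramified extension using known results: Ozaki showed that any finite $p$-group occurs as a maximal unramified $p$-extension. I will take $k$ with prescribed $p$-class field tower structure so that $\Cl_K\otimes\Z_p$ is a prescribed finite $\Z_p[G]$-module, namely the abelianization of $\Gal(L/K)$ with $L$ the maximal unramified $p$-extension. Ozaki's theorem realizes prescribed $p$-groups $\tilde G$ as $\Gal(L/k)$, with $\tilde G\to G$ having kernel $X$ abelian. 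This gives control of $X$ together with the group extension class in $H^2(G,X)$, which should correspond to the Tate class via class field theory (the Tate canonical class restricted to the global situation).

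\textbf{Main obstacle.} The hardest step is matching the extension class. Specifically, I must show that the class in $\Ext^2(X,\cE_{K,p})$ coming from the arithmetic is computed by the group extension $1\to X\to\tilde G\to G\to1$, by comparing Tate's canonical class with the fundamental class of the class field tower. I must also show that every needed pair $(X,\text{class})$ is realized by some group $\tilde G$. I would first try the cleanest case: arrange that all primes in a suitable auxiliary set $S$ split and that $\Cl_{k}$ is killed. Then I would use translation functors $\Omega^2$ to identify $\cE_{K,p}$ modulo free summands with $\Omega^{2}$ of $X$ twisted by the extension, and verify that the choice of $\tilde G$ (from a free pro-$p$ presentation) yields exactly $C$.
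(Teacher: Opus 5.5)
Your proposal has a genuine gap. The step you label the main obstacle is the key input of the proof, and you leave it unresolved. The algebraic reduction in Step 1 is also incorrect.

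\textbf{The algebra in Step 1.} For $G\neq 1$ there is no exact sequence $0\to C\to Q_1\to Q_0\to X\to 0$ with $Q_0,Q_1$ free and $X$ finite. Tensoring with $\Q_p$ and using that $\Q_p[G]$ is semisimple gives $(C\otimes\Q_p)\oplus(Q_0\otimes\Q_p)\simeq Q_1\otimes\Q_p$. So $C\otimes\Q_p$ would be free. That contradicts $(C\otimes\Q_p)\oplus\Q_p$ being free: compare the multiplicities of the trivial representation and of a nontrivial irreducible one. Your appeal to $\hat H^*(G,C)$ has no basis either, since the hypothesis on $C$ is purely rational and does not determine its Tate cohomology.

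The last term of the four-term sequence must be rationally $I_{\Z_p[G]}\otimes\Q_p$, not finite. What is needed is a module $B$ with $0\to C\to F_1\to F_2\to B\to 0$ ($F_i$ free) and $0\to A\to B\to I_{\Z_p[G]}\to 0$ ($A$ finite). The paper builds it in four moves:
\begin{itemize}
\item embed $C$ into a free $F_1$ with lattice cokernel $N$, using dual lattices;
\item take a resolution $0\to L\to F_2\to I_{\Z_p[G]}\to 0$ with $F_2$ free;
\item add free summands so that $N\otimes\Q_p\simeq L\otimes\Q_p$;
\item choose an embedding $N\hookrightarrow L$ with finite cokernel $A$.
\end{itemize}

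\textbf{The missing arithmetic link.} The link you defer is the Ritter--Weiss Tate sequence for unramified $K/k$:
$0\to\cE_{K,p}\to P_p\to F_p\to {}_pM_{(\varepsilon_{K/k,p})}\to 0$, with $F_p$ free and $P_p$ cohomologically trivial. Its last term is Gruenberg's translation module of the class field tower extension $1\to\Gal(H_K(p)/K)\to\Gal(H_K(p)/k)\to G\to 1$. This comes from Ritter--Weiss's Theorem 5, which compares $R_{S'}\to W_{S'}$ with the translated sequence; $W_{S'}$ is free because $K/k$ is unramified.

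This is what turns ``prescribe the group extension'' into ``prescribe $\cE_{K,p}$ up to free summands'':
\begin{itemize}
\item via $H^2(G,A)\simeq\Ext^1_{\Z_p[G]}(I_{\Z_p[G]},A)$, the sequence $0\to A\to B\to I_{\Z_p[G]}\to 0$ comes from an extension $\cG$ of $G$ by $A$;
\item Ozaki's theorem gives $\cG\simeq\Gal(L_p(k)/k)$;
\item for $K$ the fixed field of $A$ one has $H_K(p)=L_p(k)$, hence ${}_pM_{(\varepsilon_{K/k,p})}\simeq B$.
\end{itemize}
Your substitute is an unspecified class in $\Ext^2(\Cl_K\otimes\Z_p,\cE_{K,p})$ reached through an $S$-unit detour. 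You give no mechanism relating it to the group extension, so prescribing $\cG$ does not yet control $\cE_{K,p}$.

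\textbf{Two further missing conditions.}
\begin{itemize}
\item You must arrange $\mu_p\not\subseteq K$. Then $P_p$ is $\Z_p$-torsion-free, hence projective, hence free over the local ring $\Z_p[G]$; without this, Schanuel's lemma does not apply.
\item You must take $[k:\Q]$ large. Then, after Schanuel, Krull--Schmidt cancels all free summands on the side of $\cE_{K,p}$, giving $\cE_{K,p}\simeq C\oplus\Z_p[G]^{\oplus s}$ rather than mere stable equivalence.
\end{itemize}
Both conditions can be imposed in the refined form of Ozaki's realization theorem that the paper uses.
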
   

\begin{rem}
\begin{itemize}
\item[(1)]
The assumption on the $\Z_p[G]$-lattice $C$ is necessary for the conclusion; 
this follows from Herbrand's unit theorem (or Remark \ref{rem:nec_cond} below).
\item[(2)]
When $G$ is a cyclic $p$-group, this theorem specializes to the main theorem of a preprint of the second author \cite{Oza_pre}.
In \cite{Oza_pre}, we used Yakovrev's theory, which is especially applicable to cyclic groups.
In this paper, we remove the cyclicity assumption by using a different method.
\item[(3)]
As reviewed in the introduction of \cite{Oza_pre}, earlier works by Burns \cite{B}, Burns-Lim-Maire \cite{BLM}, and 
Kumon-Lim \cite{KL} obtained restrictions on the Galois module structure of $\cE_{K, p}$ and, more generally, on the $S$-unit group variants for finite sets $S$ of primes of $K$.
These restrictions depend on the number of ramified primes in $K/k$, the cardinality of $S$, and the $p$-rank of the $S$-ideal class groups of $K(\mu_p)$ (or of some intermediate fields of $K/k$).
Our theorem shows that no such restriction exists if one does not impose bounds on the ideal class groups.
\item[(4)]
By the Krull-Schmidt theorem, each $\Z_p[G]$-lattice decomposes into a direct sum of indecomposable $\Z_p[G]$-lattices and the multiplicity of each indecomposable component is uniquely determined.
For a finite $p$-group $G$, unless $G$ is cyclic of order $\leq p^2$, it is known that there are infinitely many isomorphism classes of indecomposable $\Z_p[G]$-lattices (Heller-Reiner\cite{HR}).
Therefore, there are many possible choices of $C$.
\end{itemize}
\end{rem}

Let us briefly outline the strategy of the proof.
One of the key ingredients is the Tate sequence due to Ritter--Weiss, which we review in Section \ref{sec:RW}.
It involves the unit group in an explicit way, and also the class group in an implicit way.
Then in Section \ref{sec:proof} we prove the main theorem.
We apply another key ingredient, an earlier result of the second author \cite{Oz} on the existence of prescribed finite $p$-class field tower groups, to find an appropriate extension $K/k$ with prescribed class group.
We then apply Schanuel's lemma to conclude that the condition on the class group suffices to control the unit group.

In Section \ref{sec:variants}, we will also show some variants of the main theorem.

\section{The work of Ritter--Weiss}\label{sec:RW}

In this section, we review the Tate sequence constructed by Ritter--Weiss \cite{RW}.

\subsection{Translation functor \`{a} la Gruenberg}\label{ss:Gru}

We begin with a brief review of the translation functor due to Gruenberg \cite[Section 10.5]{Gru} (see Ritter--Weiss \cite[page 154, Proposition 1]{RW}).

Let $G$ be a finite group and $A$ a $\Z$-module equipped with a $G$-action.
We consider an extension of $G$ by $A$, described by
\[
(\varepsilon) \quad
1 \longrightarrow A \longrightarrow \cG \longrightarrow G \longrightarrow 1,
\]
such that the original action of $G$ on $A$ coincides with the action induced by $(\varepsilon)$ via the conjugation.
We write $I_{\Z[G]} \subseteq \Z[G]$ and $I_{\Z[\cG]} \subseteq \Z[\cG]$ for the augmentation ideals.
Given $(\varepsilon)$, we define
\[
M_{(\varepsilon)} := \Z \otimes_{\Z[A]} I_{\Z[\cG]}.
\]
Then taking $\Z \otimes_{\Z[A]} (-)$ to the exact sequence $0 \to I_{\Z[\cG]} \to \Z[\cG] \to \Z \to 0$ induces an exact sequence
\[
0 \longrightarrow \Tor_1^{\Z[A]}(\Z, \Z) \longrightarrow M_{(\varepsilon)} \longrightarrow \Z[G] \longrightarrow \Z \longrightarrow 0.
\]
Using $\Tor_1^{\Z[A]}(\Z, \Z) \simeq H_1(A, \Z) \simeq A$, we obtain an exact sequence
\[
(\varepsilon^*) \quad
0 \longrightarrow A \longrightarrow M_{(\varepsilon)} \longrightarrow I_{\Z[G]} \longrightarrow 0.
\]

Conversely, given an exact sequence of $\Z[G]$-modules
\[
(\eta) \quad
0 \longrightarrow A \longrightarrow B \longrightarrow I_{\Z[G]} \longrightarrow 0,
\]
we can construct an extension $(\varepsilon)$ such that $(\varepsilon^*)$ is isomorphic to $(\eta)$.
Indeed, the set of isomorphism classes of $(\varepsilon)$ and $(\eta)$ are naturally identified with $H^2(G, A)$ and $\Ext^1_{\Z[G]}(I_{\Z[G]}, A)$ respectively.
We have canonical isomorphisms
\[
H^2(G, A)
\simeq \Ext^2_{\Z[G]}(\Z, A)
\simeq \Ext^1_{\Z[G]}(I_{\Z[G]}, A)
\]
and this composite map is realized by $(\varepsilon) \mapsto (\varepsilon^*)$.

For a prime number $p$, this argument is also valid for $\Z_p$-modules instead of $\Z$-modules as we shall describe quickly.
Let $G$ be a finite group and $A$ a $\Z_p$-module equipped with a $G$-action.
Given an extension
\[
(\varepsilon) \quad
1 \longrightarrow A \longrightarrow \cG \longrightarrow G \longrightarrow 1,
\]
letting $I_{\Z_p[G]} \subseteq \Z_p[G]$ and $I_{\Z_p[\cG]} \subseteq \Z_p[\cG]$ be the augmentation ideals, we define
\[
{}_p M_{(\varepsilon)} := \Z_p \otimes_{\Z_p[A]} I_{\Z_p[\cG]}.
\]
Then we obtain an exact sequence of $\Z_p[G]$-modules
\[
{}_p(\varepsilon^*) \quad
0 \longrightarrow A \longrightarrow {}_pM_{(\varepsilon)} \longrightarrow I_{\Z_p[G]} \longrightarrow 0.
\]
Conversely, each exact sequence of $\Z_p[G]$-modules
\[
0 \longrightarrow A \longrightarrow B \longrightarrow I_{\Z_p[G]} \longrightarrow 0
\]
is obtained as ${}_p(\varepsilon^*)$ by an extension $(\varepsilon)$.

\subsection{Tate sequence of global units}\label{ss:Tate}

We review the construction of the Tate sequence due to Ritter--Weiss, only in the case of unramified extensions, which we actually need.

Let $K/k$ be an unramified Galois extension of number fields (we assume that the archimedean primes are also unramified).
Let $H_K$ be the maximal unramified abelian extension of $K$.
Put $G:=\Gal(K/k),\ \mathcal{G}:=\Gal(H_K/k)$, and
$A:=\Gal(H_K/K)$.
Note that $A$ is isomorphic to the class group $\Cl_K$ of $K$ via the Artin map.
Then we have the natural group extension 
\[
(\varepsilon_{K/k}) \quad
1\longrightarrow A\longrightarrow
\mathcal{G}\longrightarrow G\longrightarrow 1.
\]
Applying the translation functor, we obtain an associated exact sequence
\[
(\varepsilon_{K/k}^*) \quad
 0\longrightarrow A\longrightarrow M_{(\varepsilon_{K/k})}
\longrightarrow I_{\Z[G]}\longrightarrow 0
\]
of $\Z[G]$-modules.

\begin{prop}\label{FE}
Let $K/k$ be an unramified Galois extension of number fields
with $G=\Gal(K/k)$.
Then there exists an exact sequence of finitely generated $\Z[G]$-modules
\[
0\longrightarrow E_K\longrightarrow P\longrightarrow
F\longrightarrow M_{(\varepsilon_{K/k})}\longrightarrow 0
\]
such that $P$ is $G$-cohomologically trivial and $F$ is free.
\end{prop}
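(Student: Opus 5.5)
The plan is to realize the sequence as a fibre product built from the Weil group (global fundamental class) sequence, and to get $M_{(\varepsilon_{K/k})}$ as a quotient of the corresponding Tate module. Let $J_K$ be the idele group, $C_K=J_K/K^\times$ the idele class group, and $U_K=\prod_{v\nmid\infty}\mathcal{O}_{K_v}^\times\times\prod_{v\mid\infty}K_v^\times\subseteq J_K$. Take the Weil group extension $1\to C_K\to W_{K/k}\to G\to 1$, whose class is the global fundamental class. Applying the translation functor of \S\ref{ss:Gru} gives
\[
0\longrightarrow C_K\longrightarrow V\longrightarrow I_{\Z[G]}\longrightarrow 0 .
\]
By Tate's theorem (the fundamental class generates $\hat H^2(G,C_K)$ and class field theory gives $\hat H^1(H,C_K)=0$ for all subgroups $H$), the module $V$ is $G$-cohomologically trivial.

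\textbf{Step 1: compare with $(\varepsilon_{K/k})$.} Since $K/k$ is unramified everywhere, $H_K$ is the class field of $K^\times U_K/K^\times$, and $\cG=\Gal(H_K/k)$ is the quotient of $W_{K/k}$ by the image of $U_K$ in $C_K$. So $(\varepsilon_{K/k})$ is the pushout of the Weil extension along $C_K\to C_K/\mathrm{im}(U_K)\simeq\Cl_K\simeq A$. By the functoriality of the translation functor, we get a surjection $V\to M_{(\varepsilon_{K/k})}$ with kernel $\mathrm{im}(U_K\to C_K)\simeq U_K/E_K$.

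\textbf{Step 2: the fibre product.} Choose a finitely generated free $F$ with a surjection $F\to M_{(\varepsilon_{K/k})}$, and lift it to $\varphi\colon F\to V$ using projectivity. Put
\[
P:=\ker\bigl(U_K\oplus F\to V,\ (u,f)\mapsto \iota(u)-\varphi(f)\bigr).
\]
This map is onto because $V=\iota(U_K)+\varphi(F)$. The projection $P\to F$ has image $\ker(F\to M_{(\varepsilon_{K/k})})$ and kernel $\{(u,0):\iota(u)=0\}=E_K$. This yields an exact sequence $0\to E_K\to P\to F\to M_{(\varepsilon_{K/k})}\to 0$.

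\textbf{Step 3: cohomological triviality.} Unramifiedness is used again here. For each finite $v$, the unramified local units form a cohomologically trivial module for the decomposition group. The archimedean places have trivial decomposition groups, so their factors are induced. Hence every $G$-orbit of factors of $U_K$ is induced from a cohomologically trivial module, and so $U_K$ is cohomologically trivial. The sequence $0\to P\to U_K\oplus F\to V\to 0$ then shows that $P$ is cohomologically trivial.

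\textbf{Step 4: finite generation (the main obstacle).} The modules $U_K$, $V$ and $P$ are not finitely generated. I would pick a finite $G$-stable set $S$ of places containing the archimedean ones. Then $U^S:=\prod_{v\notin S}\mathcal{O}_{K_v}^\times$ is cohomologically trivial and meets $E_K$ trivially once $S$ contains a finite place. The idea is to quotient $U_K$ and $V$ (equivalently, $C_K$) by $U^S$ before forming the fibre product.

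The delicate point is that after this quotient, the kernel of $U_K/U^S\to C_K/U^S$ is $(K^\times\cap U_K U^S)/U^S$. One must check that this is still $E_K$ and not something larger; intersecting with $U_K$ should make this work. One must also check that the resulting $P$ is finitely generated, which would follow from $0\to E_K\to P\to\ker(F\to M)\to 0$ with both ends finitely generated. If this direct truncation fails, I would instead follow Ritter--Weiss and replace $C_K$ by an $S$-truncated finitely generated model (as in the Tate sequence for $S$-units), with $S$ chosen so that the $S$-class group of $K$ vanishes. Throughout, the exact sequence is the easy part; the real content is the finiteness bookkeeping together with Tate's cohomological triviality of $V$.
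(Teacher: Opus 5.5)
Your argument is correct, but Step 4 rests on a false claim and should simply be deleted: the module $P$ built in Step 2 is already finitely generated.

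\begin{itemize}
\item $P$ is an extension of $\ker(F\to M_{(\varepsilon_{K/k})})$ by $E_K$.
\item The former is a submodule of a finitely generated module over the Noetherian ring $\Z[G]$, hence finitely generated.
\item The latter is finitely generated by Dirichlet's unit theorem.
\item $M_{(\varepsilon_{K/k})}$ is itself finitely generated, being an extension of $I_{\Z[G]}$ by $\Cl_K$, so a finitely generated free $F$ exists.
\end{itemize}

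This is exactly the criterion you state at the end of Step 4, and it applies verbatim with no truncation. That $U_K$ and $V$ are not finitely generated is harmless: they are only auxiliary modules used to compute the cohomology of $P$. So Steps 1--3 already form a complete proof.

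Your route differs genuinely from the paper's. The paper constructs nothing by hand. It quotes the Ritter--Weiss sequence $0\to E_K\to A_\theta\to R_{S'}\to\Cl_K\to 0$, whose terms are finitely generated by construction via a finite ``larger'' set $S'$. It also quotes their Theorem 5, which compares $0\to R_{S'}\to W_{S'}\to I_{\Z[G]}\to 0$ with $(\varepsilon^*_{K/k})$. Splicing then replaces $R_{S'}\to\Cl_K$ by $W_{S'}\to M_{(\varepsilon_{K/k})}$. Unramifiedness is used only to see that $W_{S'}$ is free, because $W_{\mathfrak p}\simeq\Z[G_{\mathfrak p}]$.

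You instead work with the full idelic data:
\begin{itemize}
\item Tate's theorem gives cohomological triviality of the splitting module $V$.
\item The compatibility of the Weil group with $\Gal(H_K/k)$ exhibits $(\varepsilon_{K/k})$ as the pushout along $C_K\to\Cl_K$. This is the Shafarevich--Weil theorem, which you use without naming in Step 1; it plays the role of Ritter--Weiss's Theorem 5.
\item A fibre product with an arbitrary free cover of $M_{(\varepsilon_{K/k})}$ produces $P$.
\end{itemize}
In your version unramifiedness enters through the cohomological triviality of $U_K$ rather than through freeness of $W_{S'}$.

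Your argument is more self-contained and makes clear where the hypothesis is needed. It also adapts to $S$-units by taking $K_v^\times$ at $v\in S$; these factors are cohomologically trivial exactly when those places split completely, matching the hypothesis of Section 4. The paper's argument is shorter given Ritter--Weiss, and it yields explicit finitely generated $P=A_\theta$ and $F=W_{S'}$ attached to a concrete finite set of primes.
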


\begin{proof}
We will apply the construction of
the Tate sequence by Ritter--Weiss \cite{RW}.

Let $S$ be the set of the archimedean primes of $K$ and
$S'$ a Galois stable finite set of primes of $K$
which is ``larger" in the sense of \cite[p.148]{RW}.
Applying \cite[Theorem 1]{RW} for these $S$ and $S'$, we obtain the exact sequence of $\Z[G]$-modules
\[
0\longrightarrow E_K\longrightarrow A_{\theta}\longrightarrow R_{S'}
\overset{s}{\longrightarrow}\mathrm{Cl}_K\longrightarrow 0,
\]
where
$A_\theta$ is a certain finitely generated cohomologically trivial $\Z[G]$-module and $R_{S'}$
is a $\Z[G]$-lattice defined as the kernel of the canonical
surjective homomorphism
\[
W_{S'}\longrightarrow I_{\Z[G]}
\]
given in \cite[p.149]{RW}.
Then we get the exact sequence
\begin{equation*}
0\longrightarrow E_K\longrightarrow A_\theta
\longrightarrow\mathrm{ker}(s)\longrightarrow 0.
\end{equation*}

On the other hand,
we get the commutative diagram with exact rows
\[
\xymatrix{
	0 \ar[r] & R_{S'} \ar[r] \ar@{->>}[d]_s & W_{S'} \ar[r] \ar[d]_{\tilde{\sigma}} & I_{\Z[G]} \ar[r] \ar@{=}[d] & 0\\
	0 \ar[r] & \Gal(H_K/K) \ar[r] & M_{(\varepsilon_{K/k})} \ar[r] & I_{\Z[G]} \ar[r] & 0
}
\]
by \cite[Theorem 5]{RW}, where we identify $\Cl_K$ with
$\Gal(H_K/K)$ by the Artin map
(we note that $H$ in \cite[p.152]{RW} is equal to
our $M_{(\varepsilon_{K/k})}$).
From the above diagram, we derive the exact sequence
\begin{equation*}
0\longrightarrow\mathrm{ker}(s)\longrightarrow W_{S'}
\longrightarrow M_{(\varepsilon_{K/k})}\longrightarrow 0.
\end{equation*}
Combining these two short exact sequences, we obtain
the exact sequence
\begin{equation*}
0\longrightarrow E_K\longrightarrow A_\theta
\longrightarrow W_{S'}\longrightarrow M_{(\varepsilon_{K/k})}
\longrightarrow 0.
\end{equation*}

The last sequence is what we need.
To see this, we only have to see that $W_{S'}$ is a free $\Z[G]$-module.
By the definition in \cite[p.149]{RW}, we have
\[
W_{S'}=\bigoplus_{\mathfrak{p}\in S'_*-S}\mathrm{Ind}_{G_\mathfrak{p}}^GW_\mathfrak{p}.
\]
Here, $S'_*\subseteq S'$ is a set of representatives of
the $G$-orbits of $S'$ and 
$G_{\mathfrak{p}}$ is the decomposition subgroup of $G$ for $\mathfrak{p}$.
We also have 
$W_{\mathfrak{p}}\simeq\Z[G_{\mathfrak{p}}]$ for each $\mathfrak{p}\in S'-S$
by \cite[Lemma 5(a)]{RW}
since $K/k$ is unramified.
Hence we see that $W_{S'}$ 
is a free $\Z[G]$-module, as claimed.
\end{proof}

In fact, what we need is the following $p$-adic variant.
Let $p$ be a prime number and $K/k$ an unramified Galois extension of number fields
with $G=\Gal(K/k)$.
Let $H_K(p)$ be the maximal unramified abelian $p$-extension field of $K$.
Put $\cG(p) = \Gal(H_K(p)/k)$ and $A(p) = \Gal(H_K(p)/K)$.
Note that $A(p)$ is isomorphic to the $p$-part of the class group $\Cl_K$ of $K$.
Then we have a natural group extension
\[
(\varepsilon_{K/k,\,p}) \quad
1\longrightarrow A(p) \longrightarrow
\cG(p) \longrightarrow G \longrightarrow 1.
\]
Applying the translation functor, we obtain an associated exact sequence
\[
{}_p(\varepsilon_{K/k, \, p}^*) \quad
0\longrightarrow A(p) \longrightarrow {}_pM_{(\varepsilon_{K/k, \, p})}
\longrightarrow I_{\Z_p[G]}\longrightarrow 0
\]
of $\Z_p[G]$-modules.

\begin{cor}\label{cor:Tate_p}
Let $p$ be a prime number and $K/k$ an unramified Galois extension of number fields
with $G=\Gal(K/k)$.
Then there exists an exact sequence of finitely generated $\Z_p[G]$-modules
\[
0\longrightarrow \mathcal{E}_{K,p}\longrightarrow P_p\longrightarrow
F_p\longrightarrow {}_pM_{(\varepsilon_{K/k,\,p})}
\longrightarrow 0
\]
such that $P_p$ is $G$-cohomologically trivial and $F_p$ is free.
Moreover, if $\mu_p\not\subseteq K$, then $P_p$ is projective.
\end{cor}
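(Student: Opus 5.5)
The plan is to tensor the exact sequence of Proposition \ref{FE} with $\Z_p$ over $\Z$, and then compare the last term with ${}_pM_{(\varepsilon_{K/k,\,p})}$. Since $\Z_p$ is flat over $\Z$, we get an exact sequence
\[
0\longrightarrow \cE_{K,p}\longrightarrow P\otimes\Z_p\longrightarrow F\otimes\Z_p\longrightarrow M_{(\varepsilon_{K/k})}\otimes\Z_p\longrightarrow 0 .
\]
Here $F\otimes\Z_p$ is a free $\Z_p[G]$-module. Moreover, $P\otimes\Z_p$ is $G$-cohomologically trivial: for finitely generated $P$ we have $\hat H^i(H,P\otimes\Z_p)=\hat H^i(H,P)\otimes\Z_p$ for every subgroup $H\le G$. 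So we may set $P_p:=P\otimes\Z_p$ and $F_p:=F\otimes\Z_p$.

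The substantive step is to identify $M_{(\varepsilon_{K/k})}\otimes\Z_p$ with ${}_pM_{(\varepsilon_{K/k,\,p})}$. Tensoring $(\varepsilon_{K/k}^*)$ with $\Z_p$ gives
\[
0\to A\otimes\Z_p\to M_{(\varepsilon_{K/k})}\otimes\Z_p\to I_{\Z_p[G]}\to 0 ,
\]
and $A\otimes\Z_p=A(p)$ because $A$ is finite. Under the identifications $H^2(G,A)\simeq\Ext^1_{\Z[G]}(I_{\Z[G]},A)$ of \S\ref{ss:Gru}, this sequence corresponds to the image of the class of $(\varepsilon_{K/k})$ under
\[
H^2(G,A)\to H^2(G,A(p))\simeq \Ext^1_{\Z_p[G]}(I_{\Z_p[G]},A(p)),
\]
where the first map is induced by the projection $A\to A(p)$ onto the $p$-part. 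On the group-extension side, pushing $(\varepsilon_{K/k})$ out along $A\to A(p)$ yields $\Gal(H_K(p)/k)$: the kernel of $A\to A(p)$ is the prime-to-$p$ part, whose fixed field is $H_K(p)$. This pushout is therefore exactly $(\varepsilon_{K/k,\,p})$. Since the translation functor is natural in $A$, both the isomorphism $H^2\simeq \Ext^1$ and its compatibility with base change $\Z\to\Z_p$ hold. Hence the tensored sequence is isomorphic to ${}_p(\varepsilon^*_{K/k,\,p})$, and in particular $M_{(\varepsilon_{K/k})}\otimes\Z_p\simeq{}_pM_{(\varepsilon_{K/k,\,p})}$.

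One could instead verify this directly from the definitions, by comparing $\Z\otimes_{\Z[A]}I_{\Z[\cG]}$ tensored with $\Z_p$ against $\Z_p\otimes_{\Z_p[A(p)]}I_{\Z_p[\cG(p)]}$. The Ext-class argument above is cleaner. This naturality check is where I expect the main care to be needed, though it should be routine.

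For the last assertion, I would use the fact that for a $p$-group $G$ (or in general, after passing to Sylow subgroups), a finitely generated cohomologically trivial $\Z_p[G]$-module is projective if and only if it is $\Z_p$-torsion-free. Over $\Z_p[G]$, a cohomologically trivial module has projective dimension at most $1$, and it is projective exactly when it is torsion-free. So it suffices to show that $P_p$ has no $\Z_p$-torsion. The subgroup $\cE_{K,p}\subseteq P_p$ is torsion-free when $\mu_p\not\subseteq K$, since the torsion of $\cE_{K,p}$ is $\mu(K)\otimes\Z_p=0$. The quotient $P_p/\cE_{K,p}$ embeds into the free module $F_p$, so it is torsion-free as well. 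Therefore $P_p$ is torsion-free, and hence projective.
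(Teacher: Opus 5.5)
Your proposal is correct and follows the paper's proof. You tensor the sequence of Proposition \ref{FE} with $\Z_p$, identify $M_{(\varepsilon_{K/k})}\otimes_{\Z}\Z_p$ with ${}_pM_{(\varepsilon_{K/k,\,p})}$, and deduce projectivity from $\Z_p$-torsion-freeness plus cohomological triviality, exactly as the paper does. The paper only says the identification holds ``by construction'' and asserts torsion-freeness of $P_p$ without comment; your pushout/naturality argument along $A\to A(p)$ and your argument that $P_p$ is torsion-free ($\cE_{K,p}$ torsion-free and $P_p/\cE_{K,p}\hookrightarrow F_p$) supply precisely those omitted details.
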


\begin{proof}
Taking the tensor product of the exact sequence
given in Proposition \ref{FE} with $\Z_p$,
we get the exact sequence of $\Z_p[G]$-modules
\[
0
\longrightarrow \mathcal{E}_{K,p}\longrightarrow P\otimes_{\Z}\Z_p
\longrightarrow
F\otimes_{\Z}\Z_p\longrightarrow M_{(\varepsilon_{K/k})}\otimes_{\Z}\Z_p
\longrightarrow 0.
\]
By construction, we have an isomorphism $M_{(\varepsilon_{K/k})}\otimes_{\Z}\Z_p
\simeq {}_pM_{(\varepsilon_{K/k,\,p})}$.
Therefore, setting $P_p = P\otimes_{\Z}\Z_p$ and $F_p = F\otimes_{\Z}\Z_p$, we obtain the sequence.
It remains to mention that, if $\mu_p \not \subseteq K$, then $P_p$ is projective over $\Z_p[G]$.
This is because $P_p$ is then $\Z_p$-torsion-free and $G$-cohomologically trivial.
\end{proof}

\begin{rem}\label{rem:nec_cond}
In Corollary \ref{cor:Tate_p}, let us assume $\mu_p \not \subseteq K$ and $K/k$ is a $p$-extension.
Then $\Z_p[G]$ is a local ring, so the projective module $P_p$ must be free.
By exact sequence ${}_p(\varepsilon_{K/k, \, p}^*)$, we see that $({}_pM_{(\varepsilon_{K/k,\,p})} \otimes_{\Z_p} \Q_p) \oplus \Q_p \simeq \Q_p[G]$.
Therefore, Corollary \ref{cor:Tate_p} implies that $(\cE_{K, p} \otimes_{\Z_p} \Q_p) \oplus \Q_p$ is a free $\Q_p[G]$-module.
\end{rem}

\section{Proof of the main theorem}\label{sec:proof}

Let $p$ be a prime number and $G$ a finite $p$-group.
To prove the main theorem, considering Corollary \ref{cor:Tate_p} and sequence ${}_p(\varepsilon_{K/k, p}^*)$, we first show the following.

\begin{prop}\label{PP}
Let $C$ be a $\Z_p[G]$-lattice such that $(C\otimes_{\Z_p} \Q_p) \oplus \Q_p$ is a free $\Q_p[G]$-module.
Then there exists a $\Z_p[G]$-module $B$ that fits into exact sequences of finitely generated $\Z_p[G]$-modules
\begin{equation}\label{eFF}
0\longrightarrow C\longrightarrow F_1
\longrightarrow F_2
\longrightarrow B
\longrightarrow 0,
\end{equation}
where $F_1$ and $F_2$ are free, and
\begin{equation}\label{eM}
0\longrightarrow A\longrightarrow B
\longrightarrow I_{\Z_p[G]}\longrightarrow 0,
\end{equation}
where $A$ is finite.
\end{prop}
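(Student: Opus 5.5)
Embed $C$ into a free module, take $B$ to be the cokernel, and then use the rational hypothesis on $C$ to relate $B$ to $I_{\Z_p[G]}$ with finite kernel.

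\textbf{Step 1: the sequence \eqref{eFF}.} Since $C$ is a $\Z_p[G]$-lattice and $\Z_p[G]$ is a Gorenstein (indeed Frobenius) $\Z_p$-order, $C$ embeds into a free module $F_1$ with $\Z_p$-torsion-free cokernel. To see this, take a surjection $\Z_p[G]^{\oplus n}\to C^*=\mathrm{Hom}_{\Z_p}(C,\Z_p)$ and dualize, using $\Z_p[G]^*\simeq\Z_p[G]$. Put $X=F_1/C$, which is a lattice. Choose a surjection $F_2\to X$ from a free module, and let $Y$ be its kernel, a lattice. Then we have $0\to C\to F_1\to F_2\to B\to 0$ as soon as we define $B:=F_2/\iota(F_1)$. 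For this we need a map $F_1\to F_2$ with kernel $C$, which means choosing $X\hookrightarrow F_2$ instead of a surjection onto $X$. So the correct construction embeds $X$ into a free module $F_2$, again by the dualization trick, and sets $B=F_2/X$. This gives \eqref{eFF} with $F_1,F_2$ free.

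\textbf{Step 2: the rational structure of $B$.} Over $\Q_p[G]$, which is semisimple, we get
\[
B_{\Q}\simeq F_{2,\Q}-F_{1,\Q}+C_{\Q}
\]
in the Grothendieck group. By hypothesis $C_\Q\oplus\Q_p$ is free, so $B_{\Q}\oplus \Q_p\simeq \Q_p[G]^{\oplus m}$ for some $m$. Since $B_\Q$ is an honest module, $m\ge1$. We may arrange $m=1$ by adding free summands: replacing $F_1$ by $F_1\oplus\Z_p[G]^{\oplus r}$ with the identity map into an added summand of $F_2$ does not change $B$. Adjusting the ranks appropriately, or noting we can choose the ranks freely, we want $B_\Q\simeq \Q_p[G]/\Q_p\simeq I_{\Q_p[G]}$. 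The rank of $B_\Q$ is $\mathrm{rank}F_2-\mathrm{rank}F_1+\mathrm{rank}\,C$, so we choose $F_2$ with the correct rank. If $X$ needs many generators, we can first enlarge $F_1$ to make room.

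\textbf{Step 3: the sequence \eqref{eM}.} Let $A:=B_{\mathrm{tors}}$, which is finite since $B$ is finitely generated. Then $B/A$ is a lattice $L$ with $L_\Q\simeq I_{\Q_p[G]}$. We need $L\simeq I_{\Z_p[G]}$, but this is false in general. Instead, we seek a surjection $B\to I_{\Z_p[G]}$ with finite kernel.

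Choose an injection $L\hookrightarrow I_{\Q_p[G]}$. Scaling by a power of $p$, $L$ lands inside $I_{\Z_p[G]}$ with finite index, but not onto. To get surjectivity, modify $B$ by changing the embedding $X\hookrightarrow F_2$. We have the freedom to choose any embedding, and $B$ then varies over extensions $0\to X\to F_2\to B\to0$. Equivalently, $B$ is any module with $\Ext^1(B,\text{free})$ killing appropriately.

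A cleaner route is to construct $B$ directly. Let $B$ be any finitely generated module with a surjection $\pi:B\to I_{\Z_p[G]}$ and finite kernel $A$, and ask when $B$ has a "cosyzygy" resolution $0\to C\to F_1\to F_2\to B\to0$. Take $F_2\to B$ surjective with kernel $\Omega B$. Then $C$ should equal $\Omega^2B$ up to free summands, so we need $\Omega^2 B\simeq C\oplus$ free.

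This suggests reversing the construction: start from $C$, and take the "$\Omega^{-2}$" of $C$ using the complete resolution over the Gorenstein order $\Z_p[G]$. Lattice cosyzygies exist via the embeddings of Step 1, so $C=\Omega^2 B'$ for the lattice $B'=\Omega^{-2}C$. The main obstacle is then to show that some $B$ with $\Omega^2B\simeq\Omega^2 B'$ stably maps onto $I_{\Z_p[G]}$ with finite kernel. I would try to compare $B'$ and $I$, which are rationally stably isomorphic after adding free summands. A rational isomorphism gives an injection $f:B'\to I_{\Z_p[G]}\oplus\Z_p[G]^k$ with finite cokernel $Q$. Then I would use $\Ext^1$-vanishing of lattices against free modules, or an amalgamation, to trade the finite cokernel for a finite kernel: $B$ is taken as a pullback or extension with $\Omega^2$ unchanged modulo free summands. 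This trade is the step I expect to be hardest. I would attempt it by realizing $Q$ as a quotient and using that finite modules $Q$ have $\Omega^2$ computed by Tate cohomology shifts.

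Absorbing the free summands into $F_2$ then gives both \eqref{eFF} and \eqref{eM}.
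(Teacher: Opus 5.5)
\textbf{There is a genuine gap.} The statement's real content is to produce a single $B$ that has both a two-step free coresolution of $C$ and a surjection onto $I_{\Z_p[G]}$ with finite kernel. Your Step 3 never produces such a $B$.

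As you observe, for the module $B=F_2/X$ from Step 1 such a surjection exists only if $B/B_{\mathrm{tors}}\simeq I_{\Z_p[G]}$. Indeed, a surjection with finite kernel onto the torsion-free module $I_{\Z_p[G]}$ has kernel exactly $B_{\mathrm{tors}}$. This fails in general.

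Your alternative route via $\Omega^{-2}C$ compares $B'$ with $I_{\Z_p[G]}$ in the top degree. That gives an injection with finite \emph{cokernel}, which is the wrong direction. The ``trade'' of a finite cokernel for a finite kernel is then left as an unexecuted plan, so the proposal stops exactly where the work is.

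(Incidentally, in Step 2, adding $\Z_p[G]^{\oplus r}$ to $F_1$ mapping identically onto a new summand of $F_2$ leaves $B$, and hence $m$, unchanged. What controls $m$ is the rank of the free module receiving $X$.)

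\textbf{The missing idea is to compare one syzygy lower, where the direction comes out right.} Fix a surjection $F_2\to I_{\Z_p[G]}$ with kernel $L$.
\begin{itemize}
\item Both $X=F_1/C$ and $L$ are lattices of rational type $\Q_p\oplus\Q_p[G]^{\oplus(\cdot)}$. For $X$ this follows from the hypothesis on $C$; for $L$ it follows from $I_{\Q_p[G]}\oplus\Q_p\simeq\Q_p[G]$.
\item Add free summands to $F_1$ and $X$, or to $L$ and $F_2$, to get $X\otimes\Q_p\simeq L\otimes\Q_p$.
\item Restrict such an isomorphism to $X$ and multiply by a power of $p$. This gives an injection $X\hookrightarrow L$ with finite cokernel $A$.
\end{itemize}
This is precisely the ``change of the embedding $X\hookrightarrow F_2$'' you were looking for: take the composite $X\hookrightarrow L\subseteq F_2$. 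Then $F_1\to X\hookrightarrow F_2$ has kernel $C$. Setting $B:=F_2/X$, you get $0\to L/X\to F_2/X\to F_2/L\to 0$, that is, $0\to A\to B\to I_{\Z_p[G]}\to 0$.

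The finite cokernel at the syzygy level becomes the finite kernel at the top level, so no trade is needed. This is exactly the paper's argument.
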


\begin{proof}
We first show the existence of an exact sequence of $\Z_p[G]$-lattices
\begin{equation}\label{e1}
0\longrightarrow C\longrightarrow F_1\longrightarrow N\longrightarrow 0
\end{equation}
such that $F_1$ is free.
For this, we use the notion of the dual lattices:
For any $\Z_p[G]$-lattice $L$, we define the dual $\Z_p[G]$-lattice as 
$L^*:=\mathrm{Hom}_{\Z_p}(L,\Z_p)$
on which $G$ acts by $\sigma f:=f\circ\sigma^{-1}$ for $\sigma\in G$ and $f\in L^*$.
Then we have a natural isomorphism $L\simeq(L^*)^*$, and $L$ is free if and only if so is $L^*$.
Therefore, to construct \eqref{e1}, we only have to construct an exact sequence $0 \to N^* \to F_1^* \to C^* \to 0$, which is quite standard.

On the other hand, we can take an exact sequence
\begin{equation}\label{e2}
0\longrightarrow L \longrightarrow F_2
\longrightarrow I_{\Z_p[G]}\longrightarrow 0
\end{equation}
such that $F_2$ is free of finite rank.

It follows from the assumption on $C$ and sequences \eqref{e1} and \eqref{e2}
that
\[
N\otimes_{\Z_p}\Q_p\simeq \Q_p\oplus\Q_p[G]^{\oplus s},\ \ 
L\otimes_{\Z_p}\Q_p\simeq
\Q_p\oplus\Q_p[G]^{\oplus t}
\]
for certain $s,t\ge 0$.
By adding free direct factors to
$F_1$ and $N$ of \eqref{e1}, or to $L$ and $F_2$ of \eqref{e2} if necessary,
we may assume that $s=t$, that is,
\[
N\otimes_{\Z_p}\Q_p \simeq L \otimes_{\Z_p}\Q_p.
\]
It follows that there exists an injection $N \hookrightarrow L$ with finite cokernel $A$, so we have an exact sequence
\[
0 \longrightarrow N \longrightarrow L \longrightarrow A \longrightarrow 0.
\]
By combining this with \eqref{e1}, we obtain an exact sequence
\[
0 \longrightarrow C \longrightarrow F_1 \longrightarrow L \longrightarrow A \longrightarrow 0.
\]
Using this sequence and \eqref{e2}, and letting $B$ be the cokernel of the composite homomorphism $F_1 \to L \hookrightarrow F_2$, we obtain the desired exact sequences \eqref{eFF} and \eqref{eM}.
\end{proof}

We now construct an extension $K/k$.

\begin{prop}\label{prop:ur_ext}
Let $B$ be a finitely generated $\Z_p[G]$-module fitting into \eqref{eM} with $A$ finite.
Then there exists an unramified $G$-extension of number fields
$K/k$ such that
\[
B\simeq {}_p M_{(\varepsilon_{K/k,p})}
\]
as $\Z_p[G]$-modules.
Moreover, we can choose $K/k$ so that
$[k:\Q]$ is arbitrarily large, and that $\mu_p\not\subseteq K$
if $p \geq 3$.
\end{prop}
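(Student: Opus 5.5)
The plan is to turn \eqref{eM} into a group extension and then realize that group as a class field tower group. By Section \ref{ss:Gru} (the $p$-adic translation functor), the sequence \eqref{eM} is isomorphic to ${}_p(\varepsilon^*)$ for some group extension
\[
(\varepsilon)\quad 1\longrightarrow A\longrightarrow \cG\longrightarrow G\longrightarrow 1,
\]
with $B\simeq {}_pM_{(\varepsilon)}$. Since $A$ is a finite $p$-group and $G$ is a finite $p$-group, $\cG$ is a finite $p$-group. It therefore suffices to find an unramified $G$-extension $K/k$ such that $(\varepsilon_{K/k,p})$ is isomorphic to $(\varepsilon)$ as an extension of $G$ by the $G$-module $A$. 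Functoriality of ${}_pM_{(-)}$ then gives ${}_pM_{(\varepsilon_{K/k,p})}\simeq {}_pM_{(\varepsilon)}\simeq B$ as $\Z_p[G]$-modules.

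To produce such a $K/k$, I would invoke the result of the second author \cite{Oz}. For any finite $p$-group $\Gamma$, it gives a number field $k$ whose maximal unramified $p$-extension $k_{\mathrm{ur},p}$ has Galois group $\Gal(k_{\mathrm{ur},p}/k)\simeq\Gamma$. I expect that $[k:\Q]$ can be taken arbitrarily large and $k$ can be chosen with $\mu_p\not\subseteq k$ (for instance, totally real when $p\ge3$). Apply this with $\Gamma=\cG$. Let $K$ be the fixed field of $A\subseteq\cG$, so that $K/k$ is unramified, including at infinity, with group $G$. The field $H_K(p)$ is the maximal unramified abelian $p$-extension of $K$, and it lies in $k_{\mathrm{ur},p}$ because the $p$-class field tower terminates at $k_{\mathrm{ur},p}$. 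Hence
\[
\Gal(H_K(p)/K)\simeq A/[A,A]
\]
as a quotient of $A=\Gal(k_{\mathrm{ur},p}/K)$. The main obstacle is that $A$ need not equal the full tower group of $K$ in a way that makes this quotient trivial to identify. However, $A$ is abelian, so $[A,A]=1$, and since $A$ is the full Galois group of $k_{\mathrm{ur},p}$ over $K$, we get $H_K(p)=k_{\mathrm{ur},p}$. Consequently $\cG(p)=\cG$, and $(\varepsilon_{K/k,p})$ is exactly $(\varepsilon)$, with conjugation action matching the given $G$-action on $A$.

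For the remaining conditions, I would take $K$ totally real (i.e.\ $k$ totally real with the extension unramified at infinity), which gives $\mu_p\not\subseteq K$ for $p\ge3$. Alternatively, the statement of \cite{Oz} may allow avoiding $\mu_p$ directly. The degree $[k:\Q]$ is made large either by the flexibility in \cite{Oz} or by a base change that is linearly disjoint and preserves the tower group. The point I would need to check carefully is that the version of \cite{Oz} being used allows $k$ to be totally real of large degree with the prescribed $\Gal(k_{\mathrm{ur},p}/k)$. If it does not, I would fall back on the construction in \cite{Oz} itself, which builds $k$ by adjoining to a suitable base field and controlling ramification, and check that it can be carried out over a totally real base of arbitrarily large degree.
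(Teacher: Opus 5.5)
Your proposal is correct and follows the paper's proof. You turn \eqref{eM} into a group extension $(\varepsilon)$ via Subsection~\ref{ss:Gru}, realize the finite $p$-group $\cG$ as $\Gal(L_p(k)/k)$ by \cite{Oz}, take $K$ to be the fixed field of $A$, and get $H_K(p)=L_p(k)$ because $A$ is abelian and $H_K(p)/k$ is an unramified Galois $p$-extension.

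As in the paper, the side conditions come from the refined realization results (\cite{HMR}, \cite{Oz}), which allow totally real $k$ of large degree (cf.\ Remark~\ref{rem:final}). You should drop your base-change fallback: passing to a larger base field can create new unramified $p$-extensions, so there is no general reason for it to preserve the tower group.
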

\begin{proof}
As explained in Subsection \ref{ss:Gru}, there exists a group extension
\[
(\varepsilon) \quad
1 \longrightarrow A \longrightarrow \cG \longrightarrow G \longrightarrow 1
\]
of $G$ by $A$ such that ${}_p (\varepsilon^*)$ is isomorphic to the given sequence \eqref{eM}.
In particular, we have ${}_pM_{(\varepsilon)} \simeq B$.

By applying \cite{Oz}, which states that any finite $p$-group can be realized as the Galois group of a $p$-class field tower,
we find a number field $k$ with an isomorphism
\[
\mathcal{G}\simeq\Gal(L_p(k)/k),
\]
where $L_p(k)$ is the maximal unramified $p$-extension of $k$. 
Here we may choose $k$ so that $[k:\Q]$ is arbitrarily large, and that $\mu_p\not\subseteq L_p(k)$ if $p \geq 3$ (See \cite{HMR}, \cite{Oz}).

Let $K$ be the intermediate field of $L_p(k)/k$ associated to the subgroup $A$ of $\cG$.
Then $K/k$ is an unramified $G$-extension.
Moreover, we have $H_K(p) = L_p(k)$, so the group extension $(\varepsilon_{K/k,\,p})$ is identified with $(\varepsilon)$.
Hence we have
\[
{}_pM_{(\varepsilon_{K/k,\,p})}\simeq
{}_pM_{(\varepsilon)}
\]
as $\Z_p[G]$-modules.
This completes the proof.
\end{proof}

We are now ready to prove the main theorem.

\begin{proof}[Proof of Theorem \ref{thm:main}]
We suppose that $p$ is odd.
Let $C$ be a $\Z_p[G]$-lattice as in the statement.
By Proposition \ref{PP}, we can construct a module $B$ satisfying exact sequences \eqref{eFF}
and \eqref{eM}.
Then by Proposition \ref{prop:ur_ext}, we can construct an unramified $G$-extension $K/k$ such that $B \simeq {}_pM_{(\varepsilon_{K/k,p})}$.
We can also impose the conditions $\mu_p \not \subseteq K$ and that the $\Z_p$-rank of $\mathcal{E}_{K,p}$
is greater than that of $C$.
For such an extension $K/k$, by Corollary \ref{cor:Tate_p}, we have an sequence 
\[
0\longrightarrow \mathcal{E}_{K,p}
\longrightarrow F_3
\longrightarrow F_4
\longrightarrow {}_pM_{(\varepsilon_{K/k,p})}
\longrightarrow 0
\]
such that
$F_3 = P_p$ and $F_4 = F_p$ are both free.

Since $B \simeq {}_pM_{(\varepsilon_{K/k,p})}$ and $F_1, F_2, F_3, F_4$ are all free, we can now apply Schanuel's lemma to this sequence and \eqref{eFF}.
As a result, we get an isomorphism
\[
\mathcal{E}_{K, p}\oplus F_1\oplus F_4\simeq C\oplus F_3\oplus F_2.
\]
This shows that $C$ and $\mathcal{E}_{K, p}$ are isomorphic up to free direct summands.
Moreover, by the Krull-Schmidt theorem for finitely generated $\Z_p[G]$-modules, we can cancel the free modules.
By the condition on the $\Z_p$-ranks of $\cE_{K, p}$ and $C$, 
the summands on the left hand side are all cancelled, and this completes the proof of Theorem \ref{thm:main}.
\end{proof}

\begin{rem}\label{rem:final}
In Theorem \ref{thm:main}, there exist both totally real $k$ and totally imaginary $k$ satisfying the conclusion.
To show this, we only have to apply the more detailed statement of \cite{HMR} when constructing $K/k$ in Proposition \ref{prop:ur_ext}.
\end{rem}

\section{Variants}\label{sec:variants}

In this section, we show two variants of Theorem \ref{thm:main} by using the same strategy.

To begin with, we observe a generalization of Proposition \ref{FE}.
Let $K/k$ be an unramified extension and $S$ a set of places of $k$ containing the archimedean primes; Proposition \ref{FE} dealt with the case where $S$ does not contain a non-archimedean prime.
We assume that all places in $S$ totally split in $K/k$.
Let $E_{K, S}$ be the $S$-unit group of $K$.
Let $H_{K, S}$ be the maximal intermediate field of $H_K/K$ such that every prime lying above a prime in $S$ totally splits, so $\Gal(H_{K, S}/K)$ is isomorphic to the $S$-ideal class group $\Cl_{K, S}$.
We have a natural group extension
\[
(\varepsilon_{K/k, S}) \quad
1\longrightarrow \Cl_{K, S} \longrightarrow
\Gal(H_{K, S}/k) \longrightarrow G \longrightarrow 1,
\]
which induces an exact sequence
\[
(\varepsilon_{K/k, S}^*) \quad
 0\longrightarrow \Cl_{K, S} \longrightarrow M_{(\varepsilon_{K/k, S})}
\longrightarrow I_{\Z[G]}\longrightarrow 0
\]
of $\Z[G]$-modules.

In this case, as a generalization of Proposition \ref{FE}, there exists an exact sequence of finitely generated $\Z[G]$-modules
\[
0\longrightarrow E_{K, S}\longrightarrow P\longrightarrow
F\longrightarrow M_{(\varepsilon_{K/k, S})}\longrightarrow 0
\]
such that $P$ is $G$-cohomologically trivial and $F$ is free.
To generalize the proof, we only have to notice that the module $W_{S'}$ in the proof is still free over $\Z[G]$ because of the assumption that all places in $S$ totally split in $K/k$.

\subsection{First variant}

As a first variant of Theorem \ref{thm:main}, we replace $E_K$ by $E_{K, S_p}$, where $S_p$ denotes the set of $p$-adic places:

\begin{prop}
Let $p$ be an odd prime number and 
$G$ a finite $p$-group. 
Let $C$ be a $\Z_p[G]$-lattice such that $(C\otimes_{\Z_p} \Q_p) \oplus \Q_p$ is a free $\Q_p[G]$-module.
Then there exists an unramified $G$-extension $K/k$ of number fields in which all $p$-adic primes totally split such that 
\[
E_{K, S_p} \otimes_{\Z} \Z_p \simeq C \oplus\Z_p[G]^{\oplus s}
\]
as $\Z_p[G]$-modules for a certain $s\ge 0$.
\end{prop}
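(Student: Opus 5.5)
We follow the proof of Theorem \ref{thm:main} verbatim, replacing the Tate sequence of Corollary \ref{cor:Tate_p} by its $S_p$-variant. Concretely, the generalization of Proposition \ref{FE} stated above, tensored with $\Z_p$, gives for an unramified $G$-extension $K/k$ in which all $p$-adic places totally split an exact sequence
\[
0\longrightarrow E_{K,S_p}\otimes_{\Z}\Z_p\longrightarrow P_p\longrightarrow F_p\longrightarrow {}_pM_{(\varepsilon_{K/k,S_p,p})}\longrightarrow 0 .
\]
Here $P_p$ is cohomologically trivial and $F_p$ is free. The module ${}_pM$ is attached to the extension $1\to \Cl_{K,S_p}(p)\to \Gal(H_{K,S_p}(p)/k)\to G\to 1$, where $H_{K,S_p}(p)$ is the maximal unramified abelian $p$-extension of $K$ in which the primes above $p$ split completely. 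Exactly as in Corollary \ref{cor:Tate_p}, if $\mu_p\not\subseteq K$ then $P_p$ is $\Z_p$-free, hence projective, hence free since $\Z_p[G]$ is local.

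Starting from $C$, Proposition \ref{PP} produces $B$ with sequences \eqref{eFF} and \eqref{eM}, with $A$ finite. The Gruenberg translation gives an extension $1\to A\to\cG\to G\to 1$ with ${}_pM_{(\varepsilon)}\simeq B$. The key new step is a variant of Proposition \ref{prop:ur_ext}. We need a number field $k$ with $\cG\simeq \Gal(L_p^{S_p}(k)/k)$, where $L_p^{S_p}(k)$ denotes the maximal unramified $p$-extension of $k$ in which all $p$-adic primes split completely. We also want $\mu_p\not\subseteq L_p^{S_p}(k)$ and $[k:\Q]$ large.

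Given such $k$, let $K$ be the fixed field of $A$. Then $K/k$ is an unramified $G$-extension in which all $p$-adic places totally split, because they split completely in the whole tower. Moreover $H_{K,S_p}(p)=L_p^{S_p}(k)$: the latter is the maximal $p$-extension of $k$ with these properties, and $A$ is abelian. So ${}_pM_{(\varepsilon_{K/k,S_p,p})}\simeq B$. Schanuel's lemma applied to the displayed sequence and \eqref{eFF} then gives $E_{K,S_p}\otimes\Z_p\oplus F_1\oplus F_4\simeq C\oplus F_3\oplus F_2$. By Krull--Schmidt, together with a rank condition $\mathrm{rank}_{\Z_p}(E_{K,S_p}\otimes\Z_p)\ge \mathrm{rank}_{\Z_p} C$ (guaranteed by taking $[k:\Q]$ large), we get $E_{K,S_p}\otimes\Z_p\simeq C\oplus\Z_p[G]^{\oplus s}$.

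The main obstacle is realizing $\cG$ as the Galois group of the $S_p$-split $p$-class field tower. The first thing we would try is to observe that the method of \cite{Oz} (and its refinement in \cite{HMR}) constructs $k$ so that $L_p(k)/k$ is controlled by prescribed local conditions at finitely many primes. We expect the same argument to allow imposing that the $p$-adic primes split completely, i.e.\ to realize $\cG$ as $\Gal(L_p^{S}(k)/k)$ for $S=S_p$ together with the archimedean places. An alternative route avoids modifying \cite{Oz}: arrange that $L_p(k)/k$ itself has all $p$-adic primes split completely, for instance by choosing $k$ so that these primes become principal in, and hence split completely through, the tower. Then $L_p^{S_p}(k)=L_p(k)$ and Proposition \ref{prop:ur_ext} applies unchanged. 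This step requires going into the construction of \cite{Oz,HMR}; everything else is formal.
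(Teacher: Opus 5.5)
Your proposal is essentially the paper's argument. The paper takes your ``alternative route'': it cites \cite{HMR} and \cite{Oz} for the fact that the tower $L_p(k)/k$ in Proposition \ref{prop:ur_ext} can be chosen with all $p$-adic places totally split, so the $p$-parts of $\Cl_{K,S_p}$ and $\Cl_K$ coincide and the proof of Theorem \ref{thm:main} goes through verbatim. The step you flagged as the main obstacle is therefore taken directly from those references, not proved by modifying their construction.
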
   

\begin{proof}
By \cite{HMR} and \cite{Oz}, the extension $L_p(k)/k$ in the proof of Proposition \ref{prop:ur_ext} can be chosen so that all $p$-adic places totally split.
Then we have $\Cl_{K, S_p} = \Cl_K$ and the same proof works.
\end{proof}

\subsection{Second variant}

To derive the second variant, we use the following proposition instead of the key ingredient \cite{Oz}.
For a number field $K$, we write $\mu(K)$ for the set of roots of unity of $K$.

\begin{prop}\label{prop:ur_G}
For any finite group $\cG$, there exists an unramified $\cG$-extension $L/k$ of number
fields such that $\mu(L)=\{1,-1\}$.
\end{prop}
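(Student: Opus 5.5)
The plan is to combine a known realization result for unramified extensions with a control on roots of unity. The standard input is the theorem that every finite group occurs as the Galois group of an unramified extension of number fields. This is due to Fröhlich for solvable groups and is a general consequence of work of Uchida and of Fröhlich. The concrete route I would follow goes through a $\cG$-extension of $\Q$ and a base change that kills ramification.

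\textbf{Step 1.} Embed $\cG$ into a symmetric group $S_n$, so that $\cG$ is a subgroup of the Galois group of a generic polynomial. By Hilbert irreducibility, choose a polynomial $f\in\Q[x]$ of degree $n$ with Galois group $S_n$. Let $M/\Q$ be its splitting field, and let $M_0=M^{\cG}$, so that $M/M_0$ is a $\cG$-extension.

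\textbf{Step 2.} Remove the ramification by Abhyankar's lemma. For each prime $\ell$ ramified in $M/M_0$, the inertia groups are tame except at the finitely many primes dividing $|\cG|$. Choose an auxiliary field $k'$ linearly disjoint from $M$ over $\Q$ whose local extensions at the ramified primes absorb the inertia. Concretely, take $k'$ with prescribed local completions at these primes, containing the local fields $M_{\mathfrak P}$; this is possible by weak approximation or Grunwald--Wang-type existence of global fields with prescribed completions. Then $L=Mk'$ is unramified over $k=M_0k'$ with Galois group $\cG$. Wild primes are handled the same way by prescribing the completions of $k'$ at $p\mid|\cG|$ to contain the local extension. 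Archimedean ramification is handled by making $k'$ totally imaginary.

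\textbf{Step 3.} Control the roots of unity. We need $\mu(L)=\{\pm1\}$. Roots of unity in $L$ can only be of order divisible by finitely many primes $q$, for example those with $q-1\le[L:\Q]$. For each such $q$, we must prevent $\mu_q\subseteq L$, and also prevent $\mu_4\subseteq L$ when $q=2$. I would arrange this by choosing, at one auxiliary unramified prime $\ell_q$, the local behaviour so that the residue degree of $L$ above $\ell_q$ is too small to contain $\mu_q$. Concretely, pick $\ell_q$ split completely in $M$ and make $k'$ have completion $\Q_{\ell_q}$ at a prime above $\ell_q$; then $L$ has a completion equal to $\Q_{\ell_q}$, which contains $\mu_q$ only if $q\mid \ell_q-1$. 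By Chebotarev, we can choose $\ell_q$ split in $M$ with $\ell_q\not\equiv1\pmod q$ (and $\ell_q\equiv 3\pmod 4$ for $q=2$), provided $M\cap\Q(\mu_q)=\Q$. This last condition can be arranged in Step 1 by choosing $f$ so that $M$ is linearly disjoint from the relevant cyclotomic fields, again by Hilbert irreducibility.

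I expect the main obstacle to be making the choices of Steps 2 and 3 compatible. The auxiliary field $k'$ must simultaneously have prescribed completions (to kill ramification and to pin a copy of $\Q_{\ell_q}$), be totally imaginary, and be linearly disjoint from $M$, so that $\Gal(L/k)$ remains $\cG$. I would first try to obtain $k'$ as a field generated by a root of a single polynomial that is $\ell$-adically close to chosen local polynomials and has Galois group $S_m$ for large $m$. Since $S_m$ for large $m$ has no nontrivial quotient in common with subfields of $M$ except possibly the sign quotient, linear disjointness then follows after a mild adjustment (for instance, compositing with a further quadratic twist if necessary).
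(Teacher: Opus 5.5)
Your proposal is correct in outline, but it takes a genuinely different route from the paper. The paper also reduces to $\cG=S_n$, but then imports Uchida's theorem. That theorem supplies an $S_n$-extension $L_0/\Q$ that is unramified at all finite primes over its quadratic subfield $\Q(\sqrt D)$, where $D$ can be chosen with $\Q(\sqrt D)\neq\Q(\sqrt{-1}),\Q(\sqrt{-3})$. The base field is then just an imaginary quadratic field $k=\Q(\sqrt{-\delta lD})$ over which $k(\sqrt D)$ is unramified. Roots of unity are controlled group-theoretically: since $S_n$ has abelianization of order $2$, any $k(\zeta)\subseteq L_0k$ lies in the biquadratic field $k(\sqrt D)$, whose subfields are inspected by hand.

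You instead start from an arbitrary $S_n$-extension $M/\Q$ and kill all ramification by the standard base change with a field $k'$ of prescribed completions. You then control $\mu(L)$ locally, by forcing $L$ to embed into $\Q_\ell$ for suitable primes $\ell$ split completely in $M$. This avoids Uchida's input and uses only Hilbert irreducibility, Krasner's lemma with weak approximation, and Chebotarev. It is also more flexible, since further local conditions (e.g.\ prescribed splitting at finitely many primes) can be added. The cost is a base field of large degree and more bookkeeping.

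That bookkeeping needs a little care:
\begin{itemize}
\item[(i)] The set of primes $q$ to be handled depends on $[L:\Q]$, hence on $k'$. So fix $m=[k':\Q]$ before choosing the $\ell_q$. More simply, require one degree-one prime of $k'$ over an auxiliary $\ell_0\equiv 3\pmod 4$ split in $M$; this gives $|\mu(L)|\mid \ell_0-1$ in advance.
\item[(ii)] Chebotarev only needs $\Q(\mu_q)\not\subseteq M$, not $M\cap\Q(\mu_q)=\Q$. The maximal abelian subfield of $M$ is $\Q(\sqrt{\mathrm{disc}\,f})$, so this is automatic for $q\ge 5$. The only real constraint on $f$ is therefore $\Q(\sqrt{\mathrm{disc}\,f})\neq\Q(\sqrt{-1}),\Q(\sqrt{-3})$, the analogue of the paper's condition on $D$. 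This removes the apparent circularity of choosing $f$ disjoint from ``the relevant'' cyclotomic fields before $k'$ is known.
\item[(iii)] For $\Gal(L/k)\simeq\cG$ you only need $k'\cap M=\Q$, because $M/\Q$ is Galois. This holds automatically if $k'$ is primitive of degree $m>[M:\Q]$, or if some auxiliary prime unramified in $M$ is totally ramified in $k'$. No quadratic twist is needed.
\item[(iv)] The completion condition at a ramified $\ell$ must hold at \emph{every} prime of $k'$ above $\ell$, not just at one.
\end{itemize}
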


\begin{proof}
Because any finite group can be embedded into a symmetric group $S_n$ for some $n \geq 1$, we may assume from the beginning that $\cG = S_n$.

It follows from \cite[Theorem 2]{Uch} and its proof that
there exist infinitely many quadratic fields $\Q(\sqrt{D})\ (D\in\Z)$
such that $\Q(\sqrt{D})$ is a quadratic subfield 
of a certain $S_n$-extension $L_0/\Q$ and $L_0/\Q(\sqrt{D})$ is unramified
at all the non-archimedean primes.
We take such a $D$ with $\Q(\sqrt{D})\ne\Q(\sqrt{-3}),\,\Q(\sqrt{-1})$.

Let $\delta:=D/|D|$ be the signature of $D$.
We take a prime number $l\ge 5$ such that $l\nmid D$ and $-\delta l\equiv 1\pmod{4}$.
Then $k:=\Q(\sqrt{-\delta lD})$ is an imaginary quadratic field such that $k(\sqrt{D}) = k(\sqrt{-\delta l})$ is unramified over $k$.

Putting $L:=L_0k$, we claim that $L/k$ satisfies the desired properties.
We have $k \cap L_0=\Q$ by considering the ramification of $l$, so $L/k$ is an $S_n$-extension.
The extension $L/k$ is also unramified because so are $L/k(\sqrt{D})$ and $k(\sqrt{D})/k$; since $k$ is totally imaginary, the archimedean primes do not matter.

It remains to show that $\mu(L) = \{1, -1\}$.
Assume that a root of unity $\zeta$ belongs to $L$.
Then $k(\zeta)/k$ is an abelian subextension of the $S_n$-extension $L/k$, so $k(\zeta)$ is contained in its unique quadratic subextension, which is exactly $k(\sqrt{D})$.
Hence we have $k(\zeta) \subseteq k(\sqrt{D})$, that is, $\Q(\zeta) \subseteq k(\sqrt{D})$.
Considering the five subfields of $k(\sqrt{D}) = \Q(\sqrt{-\delta l D}, \sqrt{D})$, and using $l \geq 5$ and $\Q(\sqrt{D})\ne\Q(\sqrt{-3}),\,\Q(\sqrt{-1})$, we can easily conclude that $\zeta \in \{1, -1\}$ holds.
\end{proof}

Now we present the second variant of Theorem \ref{thm:main},
which is a result over $\Z$ rather than $\Z_p$, while we allow the set $S$ to vary.
Recall that two modules are said to be projectively equivalent if they become isomorphic after taking direct sums with finitely generated projective modules.

\begin{prop}\label{prop:2nd_var}
Let $G$ be a finite group of odd order.
Let $C$ be a $\Z[G]$-lattice such that $(C \otimes_{\Z} \Q) \oplus \Q$ is a free $\Q[G]$-module.
Then there exist a $G$-extension $K/k$ of number fields and a finite set $S$ of places of $k$ containing the archimedean primes such that $E_{K, S}/\mu(K)$ and $C$ are projectively equivalent to each other as $\Z[G]$-modules.
\end{prop}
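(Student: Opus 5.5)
We follow the strategy of Section \ref{sec:proof}, now working integrally over $\Z[G]$. In that setting free modules are replaced by projective ones, and Krull--Schmidt is replaced by the weaker conclusion ``projectively equivalent''.

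\textbf{Step 1 (algebraic input).} We first prove the integral analogue of Proposition \ref{PP}. Dualizing a surjection from a free module onto $C^*$ gives an exact sequence $0\to C\to F_1\to N\to 0$ of $\Z[G]$-lattices with $F_1$ free. We also take $0\to L\to F_2\to I_{\Z[G]}\to 0$ with $F_2$ free. After adding free summands, $N\otimes\Q\simeq L\otimes\Q\simeq \Q\oplus\Q[G]^{\oplus s}$. Hence there is an injection $N\hookrightarrow L$ with finite cokernel $A$. As before, this yields
\[
0\to C\to F_1\to F_2\to B\to 0
\quad\text{and}\quad
0\to A\to B\to I_{\Z[G]}\to 0 .
\]
Since $A$ is finite, Subsection \ref{ss:Gru} (over $\Z$) gives a finite group extension $1\to A\to\cG\to G\to 1$ with $M_{(\varepsilon)}\simeq B$.

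\textbf{Step 2 (arithmetic realization).} By Proposition \ref{prop:ur_G}, there is an unramified $\cG$-extension $L/k$ with $\mu(L)=\{\pm1\}$. Let $K=L^A$, so that $K/k$ is an unramified $G$-extension. The subtle point is that $H_K$ is in general larger than $L$, so we cannot identify $\Gal(H_K/k)$ with $\cG$. This is where the freedom to choose $S$ enters. We choose a finite set $S$ of places of $k$, containing the archimedean ones, such that:
\begin{itemize}
\item[(a)] every place of $S$ splits totally in $L/k$;
\item[(b)] the primes of $K$ above $S$ generate the kernel of $\Cl_K\to\Gal(L/K)=A$.
\end{itemize}
Condition (b) can be met by Chebotarev. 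Each ideal class in the finite group $\ker(\Cl_K\to A)=\Gal(H_K/L)$ contains prime ideals $\mathfrak{P}$ of degree one over $k$ whose Frobenius in $H_K/k$ is trivial on $L$, so that $\mathfrak p=\mathfrak P\cap k$ splits completely in $L$. Finitely many such $\mathfrak p$, together with their conjugate primes in $K$, kill that kernel. With these choices $H_{K,S}=L$, and $(\varepsilon_{K/k,S})$ is identified with $(\varepsilon)$. Hence $M_{(\varepsilon_{K/k,S})}\simeq B$. Condition (a) ensures that the generalized Proposition \ref{FE} stated at the start of this section applies. It gives
\[
0\to E_{K,S}\to P\to F\to B\to 0
\]
with $P$ cohomologically trivial and $F$ free.

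\textbf{Step 3 (comparison).} Dividing by torsion, we consider $0\to E_{K,S}/\mu(K)\to P/\mu(K)\to F\to B\to 0$, where $\mu(K)=\{\pm1\}$ with trivial $G$-action. Since $|G|$ is odd, $\{\pm1\}$ is cohomologically trivial. Hence $P/\{\pm1\}$ is cohomologically trivial and, being torsion-free (it is the quotient of $P$ by its torsion subgroup, which is $\mu(K)$), it is $\Z[G]$-projective. We now compare this sequence with $0\to C\to F_1\to F_2\to B\to 0$ using the generalized Schanuel lemma for projective resolutions. This gives
\[
E_{K,S}/\mu(K)\oplus F_1\oplus F\simeq C\oplus P/\mu(K)\oplus F_2,
\]
which is the asserted projective equivalence.

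\textbf{Main obstacle.} We expect the main obstacle to be Step 2. One must ensure simultaneously that $\Gal(H_{K,S}/K)$ is exactly $A$, that the extension class matches the one constructed in Step 1, and that all places of $S$ split totally in $K/k$. The Chebotarev argument above is the route we would try first. We also need to check that the torsion of $P$ is exactly $\mu(K)$, so that the quotient in Step 3 is indeed a lattice.
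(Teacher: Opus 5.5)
Your proposal is correct and follows the paper's proof essentially step for step. You prove the integral analogue of Proposition \ref{PP} and realize $\cG$ via Proposition \ref{prop:ur_G}. You then choose $S$ by Chebotarev so that the Frobenius elements of its places generate $\Gal(H_K/L)$, which gives $H_{K,S}=L$. Finally you apply Schanuel's lemma after passing to $P/\mu(K)$, which is projective because $|G|$ is odd.

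The torsion check you flag is immediate: the torsion of $P$ maps to zero in the free module $F$, so it lies in the image of $E_{K,S}$, whose torsion is $\mu(K)$.
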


\begin{proof}
In the same way as in Proposition \ref{PP}, we can find a $\Z[G]$-module $B$ that fits into exact sequences
\[
0 \longrightarrow C \longrightarrow F_1 \longrightarrow F_2 \longrightarrow B \longrightarrow 0
\]
and
\[
0 \longrightarrow A \longrightarrow B \longrightarrow I_{\Z[G]} \longrightarrow 0,
\]
where $F_1$ and $F_2$ are free of finite rank and $A$ is a finite module.
The last sequence is induced by a group extension
\[
(\varepsilon) \quad
1 \longrightarrow A \longrightarrow \cG \longrightarrow G \longrightarrow 1.
\]

Let us construct $K/k$ and $S$ such that $(\varepsilon_{K/k, S})$ is isomorphic to this group extension $(\varepsilon)$.
By applying Proposition \ref{prop:ur_G}, we can take an unramified $\cG$-extension $L/k$ of number fields such that $\mu(L) = \{1, -1\}$.
We define $K$ as the intermediate field of $L/k$ associated to the subgroup $A$ of $\cG$.
Then $K/k$ is an unramified $G$-extension and $A$ is a quotient module of the class group $\Cl_K \simeq \Gal(H_K/K)$.
We then take a finite set $S$ of places of $k$ such that the Frobenius automorphisms in $\Gal(H_K/k)$ of the places $v \in S$ generate the subgroup $\Gal(H_K/L)$.
Such an $S$ exists by the Chebotarev density theorem.
Then we have $\Cl_{K, S} \simeq \Gal(L/K) \simeq A$.

Now we have constructed $K/k$ and $S$ such that $M_{(\varepsilon_{K/k, S})} \simeq B$.
The generalization of Proposition \ref{FE} gives us an exact sequence
\[
0\longrightarrow E_{K, S}/\mu(K) \longrightarrow P' \longrightarrow
F\longrightarrow M_{(\varepsilon_{K/k, S})}\longrightarrow 0,
\]
where $P'$ is the quotient of $P$ by the image of $\mu(K)$.
Since the order of $\mu(K)$ is two while the order of $G$ is odd, the module $P'$ is still $G$-cohomologically trivial.
Therefore, $P'$ is a projective $\Z[G]$-module.
By applying Schanuel's lemma, we complete the proof.
\end{proof}

The assumption that the order of $G$ is odd can be removed if we work over $\Z' = \Z[1/2]$ instead of $\Z$ as follows.

\begin{prop}
Let $G$ be a finite group.
Let $C$ be a $\Z'[G]$-lattice such that $(C \otimes_{\Z'} \Q) \oplus \Q$ is a free $\Q[G]$-module.
Then there exist a $G$-extension $K/k$ of number fields and a finite set $S$ of places of $k$ containing the archimedean primes such that $E_{K, S} \otimes_{\Z} \Z'$ and $C$ are projectively equivalent to each other as $\Z'[G]$-modules.
\end{prop}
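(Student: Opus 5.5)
We follow the proof of Proposition \ref{prop:2nd_var} essentially verbatim, replacing $\Z$ by $\Z'=\Z[1/2]$ throughout.

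\emph{Step 1: constructing $B$.} First we run the argument of Proposition \ref{PP} over $\Z'[G]$. Dualizing $C$ via $\mathrm{Hom}_{\Z'}(-,\Z')$ gives an exact sequence $0\to C\to F_1\to N\to 0$ with $F_1$ free. Next take $0\to L\to F_2\to I_{\Z'[G]}\to 0$ with $F_2$ free of finite rank. The hypothesis on $C$ gives $N\otimes\Q\simeq L\otimes\Q$ after adding free summands. Since both are $\Z'$-lattices in isomorphic $\Q[G]$-modules, a suitable scaling gives an injection $N\hookrightarrow L$ with finite cokernel $A$, a finite $\Z'[G]$-module of odd order. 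This produces $B$ fitting into
\[
0\to C\to F_1\to F_2\to B\to 0, \qquad 0\to A\to B\to I_{\Z'[G]}\to 0.
\]

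\emph{Step 2: realizing $B$ arithmetically.} The translation functor of Subsection \ref{ss:Gru} works over $\Z'$ just as it does over $\Z_p$: $H^2(G,A)\simeq \Ext^1_{\Z'[G]}(I_{\Z'[G]},A)$. Hence the second sequence comes from a group extension $1\to A\to\cG\to G\to 1$, and $\Z'\otimes_{\Z'[A]}I_{\Z'[\cG]}\simeq B$. Proposition \ref{prop:ur_G} provides an unramified $\cG$-extension $L/k$. Let $K=L^A$. Using Chebotarev, choose a finite set $S$ whose Frobenii generate $\Gal(H_K/L)$. Then $\Cl_{K,S}\simeq A$ and $(\varepsilon_{K/k,S})\simeq(\varepsilon)$, so $M_{(\varepsilon_{K/k,S})}\otimes\Z'\simeq B$. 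The one point needing care is that the Chebotarev places must totally split in $K/k$, as required by the generalization of Proposition \ref{FE}. This holds automatically: their Frobenii lie in $\Gal(H_K/L)\subseteq\Gal(H_K/K)$, so they split completely in $K$ (and the archimedean places split since $k$ may be taken totally imaginary). Here the condition $\mu(L)=\{\pm1\}$ is not even needed.

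\emph{Step 3: Schanuel.} Tensor the generalized sequence $0\to E_{K,S}\to P\to F\to M_{(\varepsilon_{K/k,S})}\to 0$ with $\Z'$; this preserves exactness because $\Z'$ is flat over $\Z$. The module $P\otimes\Z'$ is $G$-cohomologically trivial, since cohomological triviality is preserved by flat base change (localization) of the coefficients. It is also $\Z'$-torsion-free: torsion in $P$ comes only from $\mu(K)$ via $E_{K,S}$ and the construction. The cleanest route is to note that $P_{\mathrm{tors}}$ is cohomologically trivial and finite, and that killing its odd part is exactly the issue. So instead we apply Proposition \ref{prop:ur_G} to ensure $\mu(K)=\{\pm1\}$, whence $E_{K,S}\otimes\Z'$ is $\Z'$-free. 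Then $P\otimes\Z'$ is torsion-free up to the torsion of $P$ itself; $P$ can be chosen torsion-free apart from $\mu$, as in the Ritter--Weiss construction. A $\Z'$-torsion-free, cohomologically trivial, finitely generated $\Z'[G]$-module is projective. Schanuel's lemma applied to the two four-term sequences with projective middle terms then gives $E_{K,S}\otimes\Z'\oplus F_1\oplus F'\simeq C\oplus P'\oplus F_2$, which is projective equivalence.

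The main obstacle is the torsion bookkeeping in Step 3: we must confirm that $P\otimes\Z'$ is torsion-free. We would argue that the torsion of $P$ injects from $\mu(K)=\{\pm1\}$, which dies after inverting $2$. This is exactly why $2$ is inverted and no parity assumption on $|G|$ is needed.
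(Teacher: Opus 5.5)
Your proposal is correct and follows the paper's proof. As in the paper, you build $B$ over $\Z'[G]$ as in Proposition \ref{PP}, realize $(\varepsilon)$ as $(\varepsilon_{K/k,S})$ via Proposition \ref{prop:ur_G} and Chebotarev, tensor the generalized Ritter--Weiss sequence with $\Z'$, and apply Schanuel's lemma. The torsion-freeness of $P\otimes_{\Z}\Z'$, which you leave vague, needs no appeal to the Ritter--Weiss construction: since $F$ is torsion-free, the torsion of $P$ lies in the image of $E_{K,S}$, so it equals $\mu(K)=\{\pm1\}$ and dies after inverting $2$. Your aside that $P_{\mathrm{tors}}$ is cohomologically trivial is false in general, though you never use it.
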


\begin{proof}
We trace the proof of Proposition \ref{prop:2nd_var}, replacing $\Z$ by $\Z'$ appropreately.
Then as in the final paragraph, we have $K/k$ and $S$ such that $M_{(\varepsilon_{K/k, S})} \otimes_{\Z} \Z' \simeq B$ and the exact sequence in the final paragraph is replaced by
\[
0\longrightarrow E_{K, S} \otimes_{\Z} \Z' \longrightarrow P \otimes_{\Z} \Z' \longrightarrow
F \otimes_{\Z} \Z' \longrightarrow M_{(\varepsilon_{K/k, S})} \otimes_{\Z} \Z' \longrightarrow 0.
\]
Since the order of $\mu(K)$ is two, the $\Z'[G]$-module $E_{K, S} \otimes_{\Z} \Z'$ is torsion-free, and it follows that $P \otimes_{\Z} \Z'$ is a projective $\Z'[G]$-module.
Therefore, we can again apply Schanuel's lemma to complete the proof.
\end{proof}

\end{document}